\newtheorem{tm}{Theorem}[section]
\newtheorem{defin}{Definition}
\newtheorem{prop}{Proposition}[section]
\newtheorem{remark}{Remark}[section]
\numberwithin{equation}{section}
\title{Fractional Cauchy problem on random snowflakes}
	\author{Raffaela Capitanelli* }
	\address{Department of Basic and Applied Sciences for Engineering\newline Sapienza University of Rome\newline via A. Scarpa 10, Rome, Italy}
	\email{raffaela.capitanelli@uniroma1.it}
	\author{Mirko D'Ovidio}
	\address{Department of Basic and Applied Sciences for Engineering\newline Sapienza University of Rome\newline via A. Scarpa 10, Rome, Italy}
	\email{mirko.dovidio@uniroma1.it}
\email[Corresponding author]{raffaela.capitanelli@uniroma1.it}
\begin{document}

\date{\today}
\maketitle

\begin{abstract}
We consider time-changed Brownian motions on random Koch (pre-fractal and fractal) domains where the time change is given by the inverse to a subordinator. In particular, we study the fractional Cauchy problem  with Robin condition on the pre-fractal boundary obtaining asymptotic results for the corresponding fractional diffusions with Robin, Neumann and Dirichlet boundary conditions on the fractal domain. 
\end{abstract}

{\bf Keywords:} Time changes; fractional operators; time-fractional equations; asymptotics.

{\bf AMS-MSC:} 26A33, 35R11, 60J50, 58J37 .

\section{Introduction}

Many physical and biological phenomena take place across irregular  and wild structures  in which boundaries are \lq\lq large\rq\rq while bulk is \lq\lq small\rq\rq. In this framework, domains  with fractal  boundaries provide a suitable setting to model phenomena in which the surface effects are enhanced like, for example,  pulmonary system, root  infiltration, tree foliage, etc.. 

  In this paper, we consider random Koch domains which are domains whose boundary are constructed by mixtures of Koch curves with random scales. These domains are obtained as limit of domains with Lipschitz boundary whereas for the limit object, the fractal given by the random Koch domain, the boundary has Hausdorff dimension between $1$ and $2$.

Our attention will be focused on fractional Cauchy problems on the random Koch domains with boundary conditions.

Literature on fractional Cauchy problems is extensive both from the probability and the analysis point of view. Here, our aim  is not  providing a large list of references. We mention here only few works investigating basic and fondamental aspects: \cite{ACV},     \cite{BAZ}, \cite{CMN}, \cite{DovNane}, \cite{EK}, \cite{KLW}, \cite{MNV}, \cite{OB},  \cite{Zac}.

The non-local time-operator we deal with is very general and covers a huge class of non-local (convolution type) operators. Such operators have been recently considered in the papers \cite{chen, toaldo}. From the probabilistic point of view, we consider time-changed Brownian motions  where the time change is given by an inverse to a subordinator characterized by a symbol which is a Bernstein function. Thus, with this time-fractional operator at hand, we study the fractional Cauchy problem with Robin condition on the pre-fractal boundary and we obtain asymptotic results for the corresponding fractional diffusions with Robin, Neumann and Dirichlet boundary conditions on the fractal domain.

The asymptotic problem we deal with can be illustrated, in the simple case, by the following parabolic Dirichlet-Robin problem on the interval $(0,a)$, $a>0$. More precisely, we consider the heat equation 
\begin{align}
& \partial_t u_n = \partial_{xx} u_n, \quad t>0, \, x \in (0,a)\notag \\
& u_n(t, 0) = 0, \quad t>0 \label{pEsempio}\\
& u_n(0, x) =  f(x), \quad x \in (0,a)\notag 
\end{align}
with Robin boundary condition
\begin{align}
& \partial_x u_n(t, a) + c_n u_n(t, a)=0, \quad t>0
\end{align}
where $c_n >0$, $n \in \mathbb{N}$. The solution can be written as follows $u_n(t,x) = \sum_{k \geq 1} e^{- t \lambda_k^{(n)}} \phi_k^{(n)}(x)\, f_k$ where $f_k^{(n)} = \int f(x) \phi_k^{(n)}(x)\, dx$, $k \in \mathbb{N}$. Notice that $\phi_k^{(n)}(x)=\sin(x\sqrt{\lambda_k^{(n)}})$ and $\sqrt{\lambda_k^{(n)}} = z_k^{(n)}$ are the eigenvalues associated to $\phi_k^{(n)}$ where $z_k^{(n)}$ are solutions to $\tan (a z_k^{(n)}) = - z_k^{(n)}/ c_n$, $k \in \mathbb{N}$.

Our aim here is to point out the asymptotic behaviour of the solution $u_n$ as $n \to \infty$. We obtain three different limit problems. If $c_n \to 0$, then $z_k^{(n)} \to z_k^N= \left( \frac{\pi}{2}+ \pi k\right) \frac{1}{a}$ and therefore $u_n(t,x) \to u(t,x) = \sum_{k \geq 1} e^{- t (z_k^N)^2} \sin(x z_k^N)\, f_k$ where $f_k = \int f(x) \sin(x z_k^N) dx
$ and $u$ is the solution to \eqref{pEsempio} with Neumann condition
\begin{align*}
\partial_x u(t, a) =0, \quad t>0.
\end{align*}
If $c_n \to \infty$, 
then $z_k^{(n)}  \to z_k^D = \frac{\pi k}{a}$  and therefore  $u_n \to u$ where the solution $u(t,x) = \sum_{k \geq 1} e^{- t (z_k^D)^2} \sin(x z_k^D)\, f_k $ with $f_k = \int f(x) \sin(x z_k^D) dx$ solves \eqref{pEsempio} with Dirichlet condition
\begin{align*}
u(t, a) =0, \quad t>0.
\end{align*}
If $c_n \to c \in (0,\infty)$, then  
$z_k^{(n)} \to z_k^R >0\, :\, \tan (a z_k^R) = - \frac{z_k^R}{c}
$ and therefore $u_n \to u$ where the solution $u(t,x) = \sum_{k \geq 1} e^{- t (z_k^R)^2} \sin(x z_k^R)\, f_k$ with $f_k = \int f(x) \sin(x z_k^R) dx$ solves \eqref{pEsempio} with Robin boundary condition
\begin{align*}
\partial_x u(t, a) + c u(t,a) =0, \quad t>0.
\end{align*}
Now we wonder if a similar asymptotic behaviour holds for the analogue time-fractional problem. A simple example is given by the problem
\begin{align*}
& \partial^\beta_t u_n = \partial_{xx} u_n, \quad t>0, \, x \in (0,a)\\
& \partial_x u_n(t, a) + c_n u_n(t, a)=0, \quad t>0\\
& u_n(t, 0) = 0, \quad t>0\\
& u_n(0, x) =  f(x), \quad x \in (0,a)
\end{align*}
with $c_n >0$, $n \in \mathbb{N}$ where $\partial_t^\beta u$ is the Caputo fractional derivative of $u$ (see formula \eqref{der:Caputo} below). The solution can be written as follows
\begin{align}
\label{unfract}
u_n(t,x) = \sum_{k \geq 1} E_\beta (- \lambda_k^{(n)} t^\beta) \phi_k^{(n)}(x)\, f_k^{(n)}
\end{align}
where 
\begin{align*}
E_{\beta}(w) = \sum_{k \geq 0} \frac{w^\beta}{\Gamma(\beta k +1)}, \quad w \geq 0
\end{align*}
is the Mittag-Leffler function and the system $\{\phi_k^{(n)}, \lambda_k^{(n)}: k \in \mathbb{N}\}$ has been introduced before. By simple arguments, we get that the solution \eqref{unfract} uniformly converges to a function $u$ which turns out to be analogously related to the boundary problems above (Neumann, Dirichlet, Robin) with the Caputo time-fractional derivative $\partial_t^\beta$ in place of the ordinary derivative $\partial_t$. This is due to the fact that we have explicit representation of the system $\{\phi_k^{(n)}, \lambda_k^{(n)}: k \in \mathbb{N}\}$. 

Following the same spirit, in the present paper, we move on to general domains like the random snowflakes we have introduced before and we address the same asymptotic problem with a general time-fractional operator. In this case we do not have the same informations about the associated system and the compact representation of the solution. We overcome this difficulty by using the theory of Dirichlet forms and Markov processes. An essential tool will be given by the convergence of forms associated with time-changed processes.

We remark that the peculiarity in studying the asymptotic behaviour of these approximating problems is that one has to deal with an increasing sequence of Lipschitzian domains  which converges in the limit to the domain whose boundary is  a fractal.

The plan of the paper is the following: in Section 2 we introduce the random Koch domains; in Section 3 we recall the definition of Dirichlet forms with associated base processes; in Section 4 we introduce time-fractional equations and time changes; in the last section we prove our main results. More precisely, in Theorem 5.1 we solve the asymptotic problem for the the time-changed processes and in Theorem 5.2 we point out some peculiar aspects arising by passing from the ordinary to the fractional Cauchy problem.

\section{Random Koch domains (RKD)}
\label{section:RKD}
We first introduce the Koch (snowflake) domain and then we construct the random Koch domains. Let $\ell_{a} \in (2,4)$ with $a \in I \subset \mathbb{N}$ be the reciprocal of the contraction factor for the family $\Psi^{(a)}$ of contractive similitudes $\psi^{(a)}_i : \mathbb{C} \to \mathbb{C}$ given by
\begin{align*}
\psi^{(a)}_1(z)=\frac{z}{\ell_a}, \quad \psi^{(a)}_2(z)= \frac{z}{\ell_a} e^{\imath \theta(\ell_a)} + \frac{1}{\ell_a},
\end{align*}
\begin{align*}
\psi^{(a)}_3(z) = \frac{z}{\ell_a} e^{\imath \theta(\ell_a)} + \frac{1}{2} + \imath \sqrt{\frac{1}{\ell_a} - \frac{1}{4}}, \quad \psi^{(a)}_4(z) = \frac{z-1}{\ell_a} + 1
\end{align*}
where $\theta(\ell_a) = \arcsin(\sqrt{\ell_a(4-\ell_a)}/2)$. Let $\Xi =I^\mathbb{N}$ with $I \subset \mathbb{N}$, $|I|=N$, and let $\xi=(\xi_1, \xi_2, \ldots) \in \Xi$.  We call $\xi$ an environment sequence where $\xi_n$ says which family of contractive similitudes we are using at level $n$. Set $\ell^{(\xi)}(0)=1$ and 
\begin{align}
\ell^{(\xi)}(n)= \prod_{i=1}^n \ell_{\xi_i}.
\end{align}
We define a left shift $S$ on $\Xi$ such that if $\xi = \left( \xi_1, \xi_2,\xi_3, \ldots \right),$ then $S\xi = \left(  \xi_2,\xi_3, \ldots \right).$ For $B\subset \mathbb{R}^2$ set 
$$\Upsilon^{(a)}(B) = \bigcup_{i =1}^{4} \psi^{( a)}_i \left( B \right)$$
and
$$\Upsilon^{(\xi)}_n(B) =\Upsilon^{(\xi_1)}\circ\dots\circ\Upsilon^{(\xi_n)} \left( B \right).$$
The fractal $K^{( \xi )}$ associated with the environment sequence $\xi$ is defined by
$$K^{( \xi )}=\overline{\bigcup_{n = 1}^{+ \infty}\Upsilon_n^{(\xi)}(\Gamma)}$$
where $\Gamma=\{ P_1, P_2\}$ with $P_1=(0,0)$ and $P_2=(1,0).$ We remark that these fractals do not have any exact self-similarity, that is, there is no scaling factor which leaves the set invariant:
however, the family $\{K^{( \xi )}, \xi\in\Xi\}$ satisfies the following relation 
\begin{equation}\label{ss}
K^{( \xi )}=\Upsilon^{(\xi_1)}(K^{( S\xi )}).
\end{equation} 
Moreover, the spatial symmetry is preserved and  the set $K^{( \xi )}$ is locally spatially homogeneous, that is,  the volume measure $\mu^{(\xi)} $ on $K^{( \xi )}$ satisfies the locally spatially homogeneous condition (\ref{ssm}) below. Before  describing this measure, we introduce some notations.
For $\xi\in \Xi,$ we define the word space $$W=W^{(\xi)}=\{ (w_1,w_2,...): 1\leqslant w_i\leqslant 4\}$$ and, for $w\in W,$ we set $w|n=(w_1,...,w_n)$ and $\psi_{w | n}=\psi^{(\xi_1)}_{w_1}\circ\dots\circ\psi^{(\xi_n)}_{w_n}.$ The volume measure  $\mu^{(\xi)} $ is  the unique Radon measure on $K^{(\xi)}$ such that \begin{equation}\label{ssm}\mu^{(\xi)}(\psi_{w | n}(K^{(S^n\xi)}))=\frac{1}{4^n}\end{equation} for all $w\in W,$
(see Section 2 in \cite{BH}) as,  for each $a\in A,$ the family $\Psi^{(a)} $ has $4$  contractive similitudes. Let $K_0$ be the line segment of unit length with $P_1=(0,0)$ and $P_2=(1,0)$  as endpoints. We set, for each $n \in \mathbb{N}$, 
$$K^{(\xi)}_n=\Upsilon_n^{(\xi)}(K_0)$$
and ${K^{(\xi)}_n}$ is the so-called ${n}$\text{-th} \text{prefractal curve}.

Let us consider the random vector $\boldsymbol{\xi}= (\boldsymbol{\xi}_1, \boldsymbol{\xi}_2, \ldots)$ whose components $\boldsymbol{\xi}_i$ take values on $I$ with probability mass function ${\bf P} : \Xi \to [0,1]$. Thus, the construction of the random $n$-th pre-fractal  curve 
$$K^{(\boldsymbol{\xi})}_n=\Upsilon_n^{(\boldsymbol{\xi})}(K_0) $$
depends on the realization of $\boldsymbol{\xi}$ with probability $P(\boldsymbol{\xi}_i=\xi_i)$ for its $i$-th  component. We assume that $\{\boldsymbol{\xi}_i\}_{i=1, \ldots, n}$ are identically distributed and $\boldsymbol{\xi}_i \perp \boldsymbol{\xi}_j$ for $i\neq j$, that is we obtain the curve $K_n^{(\xi)}$ with probability
\begin{align*}
{\bf P}(\boldsymbol{\xi}| n =\xi | n) = \prod_{i=1}^n {\bf P}(\boldsymbol{\xi}_i=\xi_i)
\end{align*}
where $\boldsymbol{\xi}| n = (\boldsymbol{\xi}_1, \ldots, \boldsymbol{\xi}_n)$ and $\xi |n = (\xi_1, \ldots, \xi_n)$. Further on we only use the superscript $(\xi |n)$ or $(\boldsymbol{\xi}|n)$ in order to streamline the notation.

The fractal $K^{(\boldsymbol{\xi})}$ associated with the random environment sequence $\boldsymbol{\xi}$ is therefore defined by
$$K^{(\boldsymbol{\xi})}=\overline{\bigcup_{n = 1}^{+ \infty}\Upsilon_n^{(\boldsymbol{\xi})}(\Gamma)}$$
where $\Gamma=\{ P_1, P_2\}$ with $P_1=(0,0)$ and $P_2=(1,0).$

Let $\Omega^{(\xi |n)}$ be the planar domain obtained from a regular polygon by replacing each side with a pre-fractal curve $K_n^{(\xi)}$ and $\Omega^{(\xi)}$ be the planar domain obtained by replacing each side with the corresponding fractal curve $K^{(\xi)}$. We introduce the random planar domains $\Omega^{(\boldsymbol{\xi}|n)}$ and $\Omega^{(\boldsymbol{\xi})}$ by considering the random curves $K_n^{(\boldsymbol{\xi})}$ and $K^{(\boldsymbol{\xi})}$. Examples of (pre-fractal) random Koch domains are given in figures \ref{fig-outside} (outward curves), \ref{fig-con} (inward curves), \ref{fig-con-2} (inward curves) by choosing as regular polygon the square. 

\begin{figure}
\caption{Outward curves}
\centering
\includegraphics[scale=.5]{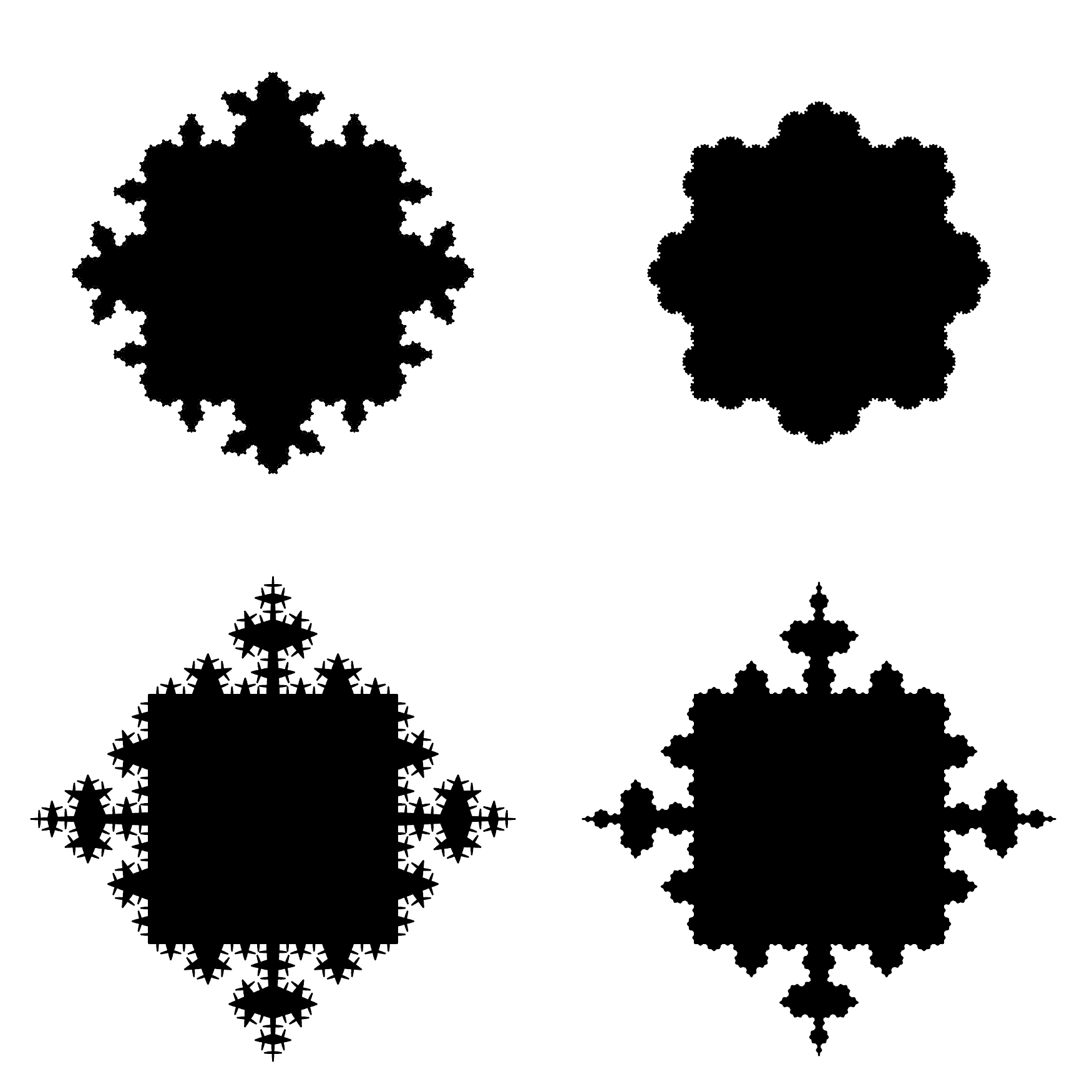}
\label{fig-outside}
\end{figure}

\begin{figure}
\caption{Inward curves}
\centering
\includegraphics[scale=.6]{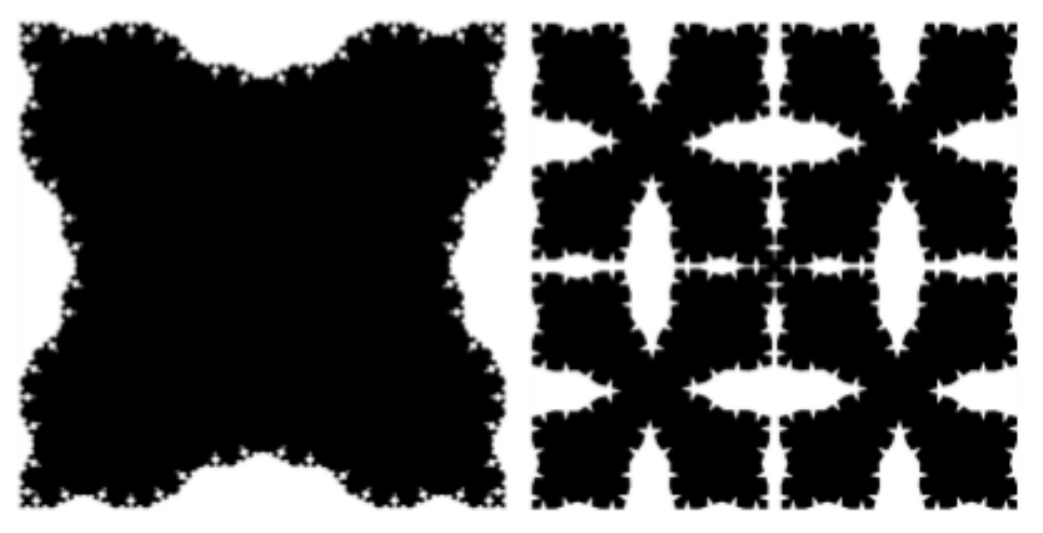}
\label{fig-con}
\end{figure}

\begin{figure}
\caption{Inward curves}
\centering
\includegraphics[scale=.6]{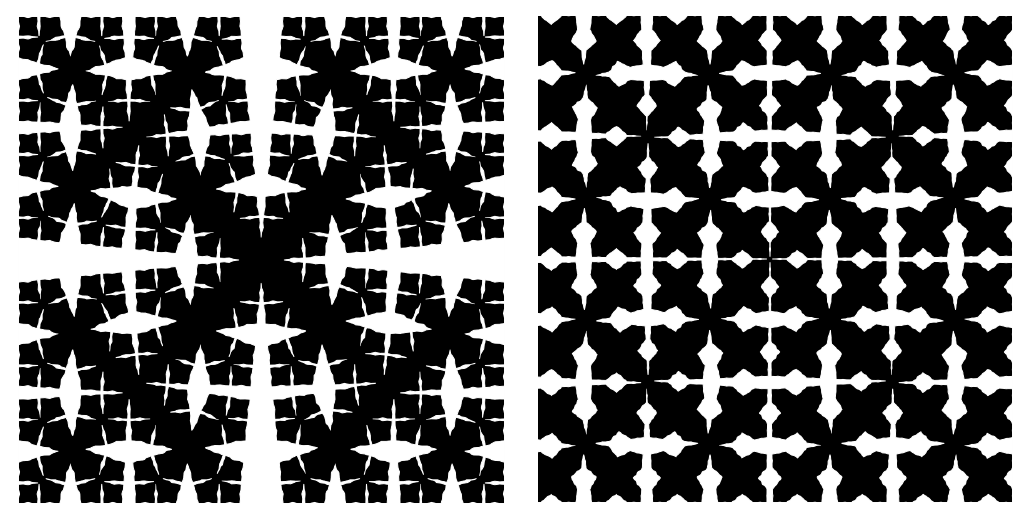}
\label{fig-con-2}
\end{figure}

%

Since $\boldsymbol{\xi}_i \stackrel{law}{=} \boldsymbol{\xi}_1$, $\forall\, i$, we have that the Hausdorff dimension $d^{(\boldsymbol{\xi})}$ of the curve  $K^{(\boldsymbol{\xi})}$ can be obtained by considering the strong law of large numbers and the fact that
\begin{align*}
\frac{\ln 4^n}{\sum_{i=1}^n \ell_{\boldsymbol{\xi}_i}}  = \frac{\ln 4}{\frac{1}{n} \sum_{i=1}^n \ell_{\boldsymbol{\xi}_i}} \stackrel{a.s.}{\to} \frac{\ln 4}{\mathbf{E}[\ln \ell_{\boldsymbol{\xi}_1}]}, \quad n \to \infty.
\end{align*}
Then (see \cite[Lemma 2.3]{BH}),
\begin{align}
d^{(\boldsymbol{\xi})} = \frac{\ln 4}{\ln \prod_{a \in I} (\ell_{a})^{P(\boldsymbol{\xi}_1 =a)}} = \frac{\ln 4}{\mathbf{E}[ \ln \ell_{\boldsymbol{\xi}_1}]}. \label{dimHK}
\end{align}

Moreover the measure $\mu^{(\xi)}$ in \eqref{ssm} has the property that there exist two positive constants $C_1, C_2,$ such that,
\begin{equation}
\label{eq:6} 
C_1 r^{d^{(\xi)} }\le \mu^{{(\xi)} }(\mathcal{B}(P,r)\cap K^{(\xi)})\le C_2 r^{d^{(\xi)} }\ ,\quad
\forall\,P\in K^{(\xi)}, 
\end{equation} 
where $\mathcal{B}(P,r)$ denotes the Euclidean ball with center in $P$ and radius $0<r\leq1$ (see  \cite{BH}).  According to Jonsson and Wallin (see  \cite{JonWal}), we say that $K^{(\xi)} $ is a $d$-set with respect to  the Hausdorff measure $\mathcal{H}^d,$ with $d=d^{(\xi)}.$ The sequence
\begin{align}
\sigma^{(\xi | n)} = \frac{\ell^{(\xi | n)}}{4^n}, \qquad \textrm{where} \qquad \ell^{(\xi | n)} = \prod_{i=1}^n \ell_{\xi_i} \label{seq-sig}
\end{align}
is obtained from the realization of $\boldsymbol{\xi}|n$ and therefore, from the realization of the random variable $\ell^{(\boldsymbol{\xi}|n)}$ with mean value given by
\begin{align*}
\mathbf{E}[\ell^{(\boldsymbol{\xi} | n)}] = \prod_{i=1}^n \mathbf{E} [\ell_{\boldsymbol{\xi}_i}] = \left( \mathbf{E}[ \ell_{\boldsymbol{\xi}_1}] \right)^n.
\end{align*}
Thus, for $\alpha=\mathbf{E} \ell_{\boldsymbol{\xi}_1} \in (2,4)$ we find the mean value $\mathbf{E}[\sigma^{(\boldsymbol{\xi} | n)}] = \alpha^n / 4^n$.

The realization $\xi|n$ can be regarded as the vector $\mathbf{a}|n=(a_1, \ldots, a_n)$ which is a $n$-dimensional vector with $N$ different values of $I$, that is $\mathbf{a}|n \in I^n$. We introduce the multinomial distribution 
\begin{align*}
p_{\mathbf{a}|n} = n! \prod_{i=1}^N \frac{p_i^{\sharp(a_i)}}{\sharp(a_i) !}, \qquad \sum_{i=1}^N \sharp(a_i) =n, \qquad \sum_{i=1}^N p_i =1
\end{align*}
where $p_i=P(\boldsymbol{\xi}_1= a_i)$ and write $p=\{p_i\}_{i=1, \ldots, N}$. Thus, for the realization of the vector $\boldsymbol{\xi}|n$ we have that
\begin{align*}
\mathbf{E} [ \mathbf{1}_{\overline{\Omega^{(\boldsymbol{\xi}|n)}}}] = \sum_{\xi |n } p_{\xi |n} \mathbf{1}_{\overline{\Omega^{(\xi |n)}}}
\quad \textrm{with} \quad p_{\xi |n} = P(\boldsymbol{\xi}|n = \xi |n).
\end{align*}
or equivalently
\begin{align*}
\mathbf{E} [ \mathbf{1}_{\overline{\Omega^{(\boldsymbol{\xi}|n)}}}] = \sum_{\mathbf{a} | n} p_{\mathbf{a}|n} \mathbf{1}_{\overline{\Omega^{(\mathbf{a}|n)}}} .
\end{align*}
We notice that
\begin{align}
\mathbf{E} [ \mathbf{1}_{\overline{\Omega^{(\boldsymbol{\xi}|n)}}}(x) ] =  1 = (p_1+\ldots + p_N)^N \quad \textrm{if} \quad  x \in \bigcap_{\Xi} \bigcup_{i=1}^n \overline{\Omega^{(\xi |i)}}. \label{indic-random-set}
\end{align}

\section{Dirichlet forms and Base processes}

Let $E$ be a locally compact, separable metric space   and $E_\partial= E \cup \{\partial\}$ be the one-point compactification of $E$. Denote by $\mathcal{B}(E)$ the $\sigma$-field of the Borel sets in $E$ ($\mathcal{B}_\partial$ is the $\sigma$-field in $E_\partial$).  Let $X=\{X_t, t\geq 0 \}$ with infinitesimal generator $(A, D(A))$ be the symmetric Markov process on $(E, \mathcal{B}(E))$ with transition function $p(t, x, B)$ on $[0, \infty) \times E \times \mathcal{B}(E)$. The point $\partial$ is the cemetery point for $X$ and a function $f$ on $E$ can be extended to $E_\partial$ by setting $f(\partial)=0$. The associated semigroup is uniquely defined by
\begin{align*}
P_t f(x):= \int_E f(y) p(t,x,dy) = \mathbf{E}_x[f(X_t)], \quad f \in C_\infty(E)
\end{align*}
with $X_0=x \in E$ where $\mathbf{E}_x$ denote the mean value with respect to the probability measure
\begin{align*}
\mathbf{P}_x(X_t \in dy) = p(t,x,dy) 
\end{align*}
and $C_\infty$ is the set of continuous function $C(E)$ on $E$ such that $f(x)\to 0$ as $x\to \partial$. Let $\mathcal{E}(u,v)= (\sqrt{-A}u, \sqrt{-A}v)$ with domain $D(\mathcal{E})= D(\sqrt{-A})$ be the Dirichlet form associated with (the non-positive definite, self-adjoint operator) $A$. Then $X$ is equivalent to an $m$-symmetric Hunt process whose Dirichlet form $(\mathcal{E}, D(\mathcal{E}))$ is on $L^2(E)$ (see  the books \cite{chen-book, FUK-book}). Without restrictions we assume that the form is regular (\cite[page 143]{FUK-book}).\\

We say that $X$ is the base process. Our aim is to consider time changes of the base process $X$. Such random times will be introduced in the next section.

\section{Time fractional equations and Time changes}

We first introduce the subordinator $H=\{H_t, t\geq 0 \}$ for which
$$\mathbf{E}_0[\exp( - \lambda H_t)] = \exp(- t \Phi(\lambda))$$ 
where $\Phi$ is the symbol of $H$. The symbol $\Phi$ may be associated also to the inverse $L$ of $H$, that is $L=\{L_t , t\geq 0\}$ defined as
\begin{align*}
L_t = \inf \{ s \geq 0\, :\, H_s > t \}, \quad t\geq 0.
\end{align*}
We assume that $H_0=0$, $L_0=0$. By definition, we also have that
\begin{align}
\label{relationHL}
\mathbf{P}_0(H_t < s) = \mathbf{P}_0(L_s>t), \quad s,t>0.
\end{align}
The symbol $\Phi$ we consider hereafter is a Bernstein function with representation
\begin{equation}
\Phi(\lambda) = \int_0^\infty \left( 1 - e^{ - \lambda z} \right) \Pi(dz), \quad \lambda \geq 0 \label{LevKinFormula}
\end{equation} 
where $\Pi$ on $(0, \infty)$ with $\int_0^\infty (1 \wedge z) \Pi(dz) < \infty$ is the associated \emph{L\'evy measure}.  We also recall that
\begin{align}
\label{tailSymb}
\frac{\Phi(\lambda)}{\lambda} = \int_0^\infty e^{-\lambda z} \overline{\Pi}(z)dz, \qquad \overline{\Pi}(z) = \Pi((z, \infty))
\end{align}
and $\overline{\Pi}$ is the so called \emph{tail of the L\'evy measure}. Both random times $H,L$ are non-decreasing.  We do not consider step-processes with $\Pi((0, \infty)) < \infty$ and therefore we focus only on strictly increasing subordinators with infinite measures. Thus, the inverse process $L$ turns out to be a continuous process. For details, see the books \cite{bertoin, BerBook}.\\

We now introduce the fractional operators and the fractional equations governing the time-changed process $X^L = \{ X \circ L_t, t\geq 0\}$, that is the base process $X=\{ X_t, t\geq 0\}$ with the time change $L$ characterized by the symbol $\Phi$.\\

Let $M>0$ and $w\geq 0$. Let $\mathcal{M}_w$ be the set of (piecewise) continuous function on $[0, \infty)$ of exponential order $w$ such that $|u(t)| \leq M e^{wt}$. Denote by $\widetilde{u}$ the Laplace transform of $u$. Then, we define the operator $\mathfrak{D}^\Phi_t : \mathcal{M}_w \mapsto \mathcal{M}_w$ such that
\begin{align*}
\int_0^\infty e^{-\lambda t} \mathfrak{D}^\Phi_t u(t)\, dt = \Phi(\lambda) \widetilde{u}(\lambda) - \frac{\Phi(\lambda)}{\lambda} u(0), \quad \lambda > w
\end{align*}
where $\Phi$ is given in \eqref{LevKinFormula}. Since $u$ is exponentially bounded, the integral $\widetilde{u}$ is absolutely convergent for $\lambda>w$.  By Lerch's theorem the inverse Laplace transforms $u$ and $\mathfrak{D}^\Phi_tu$ are uniquely defined. Notice that
\begin{align}
\label{PhiConv}
\Phi(\lambda) \widetilde{u}(\lambda) - \frac{\Phi(\lambda)}{\lambda} u(0) = & \left( \lambda \widetilde{u}(\lambda) - u(0) \right) \frac{\Phi(\lambda)}{\lambda}.
\end{align}
Simple arguments say that $\mathfrak{D}^\Phi_t$ can be written as a convolution involving the ordinary derivative and the inverse transform of \eqref{tailSymb} iff $u \in \mathcal{M}_w \cap C([0, \infty), \mathbb{R}_+)$ and $u^\prime \in \mathcal{M}_w$, that is,
\begin{align*}
\mathfrak{D}^\Phi_t u(t) = \int_0^t u^\prime(s) \overline{\Pi}(t-s)ds.
\end{align*}
We notice that when $\Phi(\lambda)=\lambda$ (that is, the ordinary derivative) we have that a.s. $H_t = t$ and $L_t=t$. 

We also notice that for $\Phi(\lambda)=\lambda^\beta$, the symbol of a stable subordinator, the operator $\mathfrak{D}^\Phi_t$ becomes the Caputo fractional derivative
\begin{align}
\label{der:Caputo}
\mathfrak{D}^\Phi_t u(t) = \frac{1}{\Gamma(1-\beta)} \int_0^t \frac{u^\prime(s)}{(t-s)^\beta}ds
\end{align}
with $u^\prime(s)=du/ds$.

For $\Phi(\lambda) = (\lambda +\eta)^\beta - \eta^\beta$, with $\eta\geq 0$ and $\beta\in (0,1)$, the operator $\mathfrak{D}^\Phi_t$ becomes the Caputo tempered fractional derivative
\begin{align*}
\mathfrak{D}^\Phi_t u(t) = 
\frac{1}{\Gamma(1-\beta)} \int_0^{t} \frac{u^\prime(s)}{(t-s)^{\beta}} e^{- \eta(t-s)} \, ds
\end{align*}
with $u^\prime(s)=du/ds$.

For explicit representation of the operator $\mathfrak{D}^\Phi_t$ see also the recent works \cite{chen, toaldo}. \\

Let $X$ be the process with generator $(A, D(A))$ introduced above. In the present work  we consider the time fractional equation
\begin{align}
\label{time-frac-problem}
\mathfrak{D}^\Phi_t u = A u, \qquad u_0=f \in D(A).
\end{align}
The probabilistic representation of the solution to \eqref{time-frac-problem} is written in terms of the time-changed process $X^{L}$, that is 
\begin{align}
\label{sol-time-frac-problem}
u(t,x) = \mathbf{E}_x[f(X^{L}_t)]= \int_0^\infty P_s f(x)\, \mathbf{P}_0(L_t \in ds), \quad x \in E, \, t>0.
\end{align}
We notice that $ \eqref{sol-time-frac-problem}$ is not a semigroup, indeed the random time $L$ is not Markovian and therefore, the composition $X^L$ is not a Markov process. 

The fractional Cauchy problem has been investigated by many authors by considering Caputo derivative and only recently, by taking into account more general operators. The following theorem has been obtained in \cite{CapDovVIA} for Feller processes (not necessarily Feller diffusions, see \cite{CapDovVIA}) and we mention here such a result for the reader's convenience.
\begin{tm}
\label{time-frac-THM}
The function \eqref{sol-time-frac-problem} is the unique strong solution in $L^2(E)$ to \eqref{time-frac-problem} in the sense that:
\begin{enumerate}
\item $\varphi: t \mapsto u(t, \cdot)$ is such that $\varphi \in C([0, \infty), \mathbb{R}_+)$ and $\varphi^\prime  \in \mathcal{M}_0$,
\item $\vartheta : x \mapsto u(\cdot, x)$ is such that $\vartheta, A\vartheta \in D(A)$,
\item $\forall\, t > 0$, $\mathfrak{D}^\Phi_t u(t,x) = Au(t,x)$ holds a.e in $E$
\item $\forall\, x \in E$, $u(t,x) \to f(x)$ as $t \downarrow 0$.
\end{enumerate}
\end{tm}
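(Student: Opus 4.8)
The plan is to verify the four claims by working in the Laplace domain, where the operator $\mathfrak{D}^\Phi_t$ is defined, and then invert. The probabilistic representation \eqref{sol-time-frac-problem} is the natural object to manipulate because the spectral/semigroup structure of $A$ interacts cleanly with the law of $L$.

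\medskip

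First I would record the key analytic identity governing the time change. Since $L$ is the inverse of the subordinator $H$ with symbol $\Phi$, the relation \eqref{relationHL} yields, after a standard computation, the Laplace transform in $t$ of the density of $L_t$:
\begin{align*}
\int_0^\infty e^{-\lambda t}\, \mathbf{P}_0(L_t \in ds)\, dt = \frac{\Phi(\lambda)}{\lambda}\, e^{-s\Phi(\lambda)}\, ds, \quad \lambda > 0.
\end{align*}
Taking the Laplace transform of \eqref{sol-time-frac-problem} and using Fubini together with the above gives
\begin{align*}
\widetilde{u}(\lambda, x) = \frac{\Phi(\lambda)}{\lambda} \int_0^\infty e^{-s\Phi(\lambda)} P_s f(x)\, ds = \frac{\Phi(\lambda)}{\lambda}\, R_{\Phi(\lambda)} f(x),
\end{align*}
where $R_\mu = (\mu - A)^{-1}$ is the resolvent of $A$. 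Using the resolvent identity $R_{\Phi(\lambda)} f = (\Phi(\lambda) - A)^{-1} f$, I would compute $\Phi(\lambda)\widetilde{u}(\lambda) - \tfrac{\Phi(\lambda)}{\lambda} f$ and, via the algebraic rearrangement \eqref{PhiConv} together with $A R_{\Phi(\lambda)} f = \Phi(\lambda) R_{\Phi(\lambda)} f - f$, identify it with $\widetilde{Au}(\lambda, x)$. By the very definition of $\mathfrak{D}^\Phi_t$ through its Laplace transform, this is precisely the statement that $\mathfrak{D}^\Phi_t u = A u$ in the Laplace domain. Lerch's theorem (invoked in the excerpt) then gives item (3) pointwise a.e., provided the regularity items (1)--(2) hold so that the transforms are legitimate.

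\medskip

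Next I would establish the regularity claims. For item (2), since $f \in D(A)$ and $P_s$ commutes with $A$ on $D(A)$, the map $x \mapsto u(t,x)$ inherits membership in $D(A)$ by dominated convergence and the closedness of $A$; iterating gives $Au \in D(A)$ as required. For item (1), continuity of $t \mapsto u(t,\cdot)$ follows from continuity of $L$ (noted in the excerpt, since $\Pi$ is infinite) together with boundedness of the semigroup; the exponential bound on $\varphi'$ needed for $\varphi' \in \mathcal{M}_0$ comes from the contraction property $\|P_s f\| \le \|f\|$ and the identity $\partial_t u = \int_0^\infty A P_s f\, \mathbf{P}_0(L_t \in ds)$. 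Finally, item (4) is the initial condition: as $t \downarrow 0$ one has $L_t \to 0$ a.s., so $X^L_t \to X_0 = x$ and $\mathbf{E}_x[f(X^L_t)] \to f(x)$ by right-continuity, which in $L^2$ is the strong continuity of the semigroup at $0$.

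\medskip

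The main obstacle I anticipate is \emph{uniqueness} rather than existence. Establishing that \eqref{sol-time-frac-problem} solves the equation is a transform computation, but showing it is the \emph{unique} strong solution in the stated sense requires ruling out other solutions. The clean route is again through the Laplace transform: if $v$ is any strong solution satisfying (1)--(4), then its transform $\widetilde{v}(\lambda,x)$ must satisfy $(\Phi(\lambda) - A)\widetilde{v}(\lambda, x) = \tfrac{\Phi(\lambda)}{\lambda} f(x)$, an equation whose solution is unique because $\Phi(\lambda) > 0$ lies in the resolvent set of the non-positive self-adjoint operator $A$; hence $\widetilde{v} = \widetilde{u}$, and Lerch's theorem forces $v = u$. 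The delicate point is justifying that items (1)--(2) are exactly what is needed to apply the operator $\mathfrak{D}^\Phi_t$ termwise and to interchange $A$ with the Laplace integral, so I would be careful to phrase the uniqueness argument entirely within the function class $\mathcal{M}_0$ on which $\mathfrak{D}^\Phi_t$ was defined. Since the result is quoted from \cite{CapDovVIA}, I expect the proof here to reduce to checking that the present diffusion setting falls within the hypotheses used there.
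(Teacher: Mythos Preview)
The paper does not actually prove this theorem: it is stated without proof, explicitly attributed to \cite{CapDovVIA} and ``mention[ed] here \dots\ for the reader's convenience.'' So there is no proof in the paper to compare your proposal against; the paper's ``proof'' is simply a citation.

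That said, your sketch is the standard Laplace-transform argument for such results and is essentially the approach used in the cited literature (\cite{chen}, \cite{toaldo}, \cite{CapDovVIA}). The resolvent computation
\[
\Phi(\lambda)\,\widetilde u(\lambda,\cdot)-\tfrac{\Phi(\lambda)}{\lambda}f
=\tfrac{\Phi(\lambda)}{\lambda}\bigl(\Phi(\lambda)R_{\Phi(\lambda)}f-f\bigr)
=\tfrac{\Phi(\lambda)}{\lambda}\,A R_{\Phi(\lambda)}f
= A\,\widetilde u(\lambda,\cdot)
\]
is exactly the core identity, and your uniqueness argument via injectivity of $\Phi(\lambda)-A$ for $\Phi(\lambda)>0$ is the right one. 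The only place your outline is a bit thin is the verification that $\varphi'\in\mathcal{M}_0$: writing $\partial_t u=\int_0^\infty AP_sf\,\mathbf{P}_0(L_t\in ds)$ presupposes differentiability in $t$ of the law of $L_t$, which requires the (known) regularity of the density of $L_t$ under the standing assumption $\Pi((0,\infty))=\infty$. You correctly flag at the end that the real content here is checking the present setting fits the hypotheses of \cite{CapDovVIA}; that is indeed all the paper is asserting.
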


In \cite{chen} the author proves existence and uniqueness of strong solutions to general time fractional equations with initial datum $f \in D(A)$. In \cite{CKKW} the authors establish existence and uniqueness for weak solutions and initial datum $f \in L^2$. The result in Theorem \ref{time-frac-THM} has been proved in a general setting, that is by considering a generator of a Feller process as in \cite{chen} but following a very different approach. We notice that the condition on the initial datum $f$ must be better specified  for the compact representation of the solution, this is the case investigated in \cite{DovNane} for instance (the domain has no boundary) or the case investigated in \cite{CMN} (with Dirichlet condition on the boundary).\\

In the next section we will study continuous base processes time changed by continuous random times, thus we do not stress the fact that the previous result holds for Feller process (right-continuous with no discontinuity other than jumps). 




\section{Main results}
We consider the prefractal RKD $\Omega^{(\xi | n)}$ defined in Section \ref{section:RKD} and we construct the set $\Omega^{(\xi|n)} \setminus B$ where $B \subset \Omega^{(\xi | 1)}$ is a ball. 

 Then, we consider Brownian diffusions on the Random Koch Domain $\Omega^{(\xi|n)} \setminus B$. Let $X^n=\{X_t^n, t\geq 0\}$ with $X_0^n=x \in \Omega^{(\xi | n)} \setminus B$ be a sequence of planar Brownian motions for a given $\xi \in \Xi$. Let $(A^n, D(A^n))$ be the generator of $X^n$, in particular $A^n = \Delta$ and
 $$D(A^n) =$$
\begin{align*}
 \{u \in H^1(\Omega^{(\xi | n)}\setminus B): \, \Delta u \in L^2(\Omega^{(\xi | n)}\setminus B),\, u|_{\partial B}=0,\, (\partial_{\bf n} u + c_n \sigma^{(\xi | n)} u)|_{\partial \Omega^{(\xi | n)}}=0\}
\end{align*}
where $c_n \geq 0$, $\mathbf{n}(x)$ denote the inward normal vector at $x \in \partial \Omega^{(\xi | n)}$ and  $\sigma^{(\xi | n)} $ is defined in \eqref{seq-sig}.
 It is well-known that there is one to one correspondence between  the infinitesimal generator of $X^n$ and the closed symmetric form $(\mathcal{E}^n, D(\mathcal{E}^n)$ (see \cite[Theorem 1.3.1]{FUK-book}).

 We recall that  a form $(\mathcal{E}^n, D(\mathcal{E}^n)$ can be defined in the whole of $L^2(F, m)$ by setting 
$\mathcal{E}^n(u,u) = +\infty \quad \forall\, u \in L^2(F, m) \setminus D(\mathcal{E}^n).$ Similarly  a forms $\mathcal{E}$, $\mathcal{E}$ can be defined in the whole of $L^2(F, m)$ by setting 
$\mathcal{E}(u,u) = +\infty \quad \forall\, u \in L^2(F, m) \setminus D(\mathcal{E}).$

For the convenience of the readers we recall the definition of convergence of forms introduced by Mosco in \cite{Mosc}, denoted by $M$-convergence.

\begin{defin}
\label{d2.4} A sequence of forms $\{\mathcal{E}^n(\cdot,\cdot)\}$ $M$-converges to a form $\mathcal{E}(\cdot,\cdot)$ in $L^2(F)$ if
\begin{itemize}
\item[{\bf (a)}] For every $v_n$ converging weakly to $u$ in $L^2(F)$
\begin{equation}
\label{e2.7} \underline{\lim}\ \mathcal{E}^n(v_n,v_n)\ge \mathcal{E}(u,u)\ ,\quad
\hbox{\rm as } n\to\infty. \end{equation}
\item[{\bf (b)}] For every $u\in L^2(F)$ there exists $v_n$ converging strongly
in $L^2(F)$ such that \begin{equation}
\label{e2.8} \overline{\lim}\ \mathcal{E}^n(v_n,v_n
)\leq \mathcal{E}(u,u)\ ,\quad
\hbox{\rm as }n\to\infty\ . \end{equation}
\end{itemize}
\end{defin}

In our framework, we consider the pre-fractal form $\mathcal{E}^n(\cdot,\cdot)$ on  $L^2(\Omega^{(\xi)}\setminus B)$ by defining

$$
\mathcal{E}^n(u,u)=
$$
$$
\begin{cases}  \displaystyle\int_{\Omega^{(\xi |n)} \setminus B} |\nabla u|^2\, dx dy +c_n \sigma^{(\xi | n)} \int_{\partial \Omega^{(\xi |n)}}
|u|^2 ds  & \hbox{\text for}\,u |_{\Omega^{(\xi |n)}\setminus B}\in
 H^1(\Omega^{(\xi | n)}\setminus B), \, u|_{\partial B}=0
\\
+\infty\quad &\hbox{\text otherwise }.
\end{cases}
$$
We now introduce the time-changed process $X^{L,n} = X^n \circ L$ and we study the asymptotic behaviour of $X^{L,n}$ depending on the asymptotics for $c_n$. The process $X^{L,n}$ can be considered in order to study the corresponding time-fractional Cauchy problem on $\Omega^{(\xi|n)} \setminus B$
\begin{align*}
\mathfrak{D}^\Phi_t u = A^n u, \quad u_0=f \in D(A^n).
\end{align*}

Let $\mathbb{D}$ be the set of continuous functions from $[0, \infty)$ to $E_\partial = \Omega^{(\xi)}\cup \partial$ which are right continuous on $[0, \infty)$ with left limits on $(0, \infty)$. We denote by $\partial$ the cemetery point, that is $E^n_\partial$ is the one-point compactification of $E^n=\Omega^{(\xi | n)}$, $n \in \mathbb{N}$. Let $\mathbb{D}_0$ the set of non-decreasing continuous function from $[0, \infty) $ to $[0, \infty)$.

\begin{prop}
\label{teoK}
(Kurtz, \cite{KurtzRTC}. Random time change theorem). Suppose that $X^n$, $X$ are in $\mathbb{D}$ and $L^n$, $L$ are in $\mathbb{D}_0$. If $(X^n, L^n)$ converges to $(X, L)$ in distribution as $n\to \infty$, then $X^n \circ L^n$ converges to $X \circ L$ in distribution as $n \to \infty$.
\end{prop}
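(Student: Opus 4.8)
The plan is to deduce the in-distribution statement from a purely pathwise (deterministic) continuity property of the composition map $C:(x,\ell)\mapsto x\circ\ell$, and to transport that property to the probabilistic setting through the Skorokhod representation theorem together with the continuous mapping theorem. Thus I would proceed in three stages: (i) realise the assumed joint convergence almost surely on an auxiliary probability space; (ii) prove that, at the relevant limiting paths, $x_n\to x$ in $\mathbb{D}$ and $\ell_n\to\ell$ in $\mathbb{D}_0$ force $x_n\circ\ell_n\to x\circ\ell$ in $\mathbb{D}$; (iii) read the conclusion back as convergence in distribution.

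\emph{First stage.} Since $(X^n,L^n)\to(X,L)$ in distribution in $\mathbb{D}\times\mathbb{D}_0$, and this product of Skorokhod spaces is separable and completely metrizable ($\mathbb{D}$ with the $J_1$ topology is Polish, and on $\mathbb{D}_0$ the induced topology is uniform convergence on compacts), the Skorokhod representation theorem furnishes a single probability space carrying copies $(\tilde X^n,\tilde L^n)$ and $(\tilde X,\tilde L)$, equal in law to the originals, with $(\tilde X^n,\tilde L^n)\to(\tilde X,\tilde L)$ almost surely. It then suffices to show that $C$ sends almost every such convergent sequence of paths to a convergent sequence.

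\emph{Second stage (the heart).} I would establish the deterministic lemma: if $\ell_n,\ell\in\mathbb{D}_0$ with $\ell_n\to\ell$ (equivalently, since $\ell$ is continuous and nondecreasing, $\ell_n\to\ell$ uniformly on compacts) and $x_n\to x$ in $J_1$, then $x_n\circ\ell_n\to x\circ\ell$ in $J_1$, \emph{provided $(x,\ell)$ is a continuity point of $C$}. The mechanism is to express the $J_1$ convergence $x_n\to x$ through increasing time-deformations $\lambda_n\to\mathrm{id}$, uniform on compacts, with $x_n\circ\lambda_n\to x$ uniformly on compacts, and then to take as candidate deformations for the composed paths $\gamma_n=\ell_n^{-1}\circ\lambda_n\circ\ell$, where $\ell_n^{-1}$ is the right-continuous generalised inverse of the monotone map $\ell_n$. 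Continuity of the limiting clock $\ell$ guarantees that $x\circ\ell$ is again càdlàg and that its jumps occur exactly at the \emph{clean crossings}, namely at those $s$ where $\ell$ strictly increases through a discontinuity level of $x$; monotonicity of each $\ell_n$ forces every such level to be crossed once, at a time converging to $s$, so that the jumps of $x_n\circ\ell_n$ track those of $x\circ\ell$ with matching heights and nearby locations, while on the complementary (flat or continuity) set the composed paths converge uniformly. Together this is precisely the $J_1$ convergence $x_n\circ\ell_n\to x\circ\ell$.

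\emph{Third stage and main obstacle.} The delicate point — and the one I expect to be the main obstacle — is the behaviour of the crossing times when $\ell$ has a flat stretch beginning exactly at a discontinuity level of $x$: there a monotone $\ell_n$ may reach that level with a lag that does not vanish, displacing the composed jump by a fixed amount and destroying $J_1$ convergence. This is why continuity of $C$ can only be asserted on the clean-crossing set $\mathcal C\subset\mathbb{D}\times\mathbb{D}_0$, and the lemma of the second stage must be carried out with $(x,\ell)\in\mathcal C$. The proof is then closed by the continuous mapping theorem: $C$ is Borel measurable and continuous at every point of $\mathcal C$, so once the limit law satisfies $\mathbf{P}\big((X,L)\in\mathcal C\big)=1$ — that is, the discontinuities of $X$ almost surely avoid the onsets of the flat stretches of $L$ — the almost sure convergence of the representatives passes through $C$, giving $X^n\circ L^n=C(X^n,L^n)\to C(X,L)=X\circ L$ in distribution. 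I would flag that verifying the membership $(X,L)\in\mathcal C$ almost surely is exactly the content on which the applicability of the theorem rests, and would keep the continuity lemma itself at the stated level of generality, the independence of base process and clock entering only through this last $\mathbf P$-a.s.\ avoidance.
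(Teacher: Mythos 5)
Your strategy (Skorokhod representation, a deterministic continuity lemma for the composition map $C$, then the continuous mapping theorem) is a genuinely different route from the paper's, whose proof is essentially a citation: it invokes parts b) and c) of Theorem 1.1 and part a) of Lemma 2.3 of \cite{KurtzRTC}, using that $H$ is strictly increasing (hence $L$ is continuous). But as written your argument has a genuine gap: it is left conditional on the unverified hypothesis $\mathbf{P}\left((X,L)\in\mathcal{C}\right)=1$, which you explicitly flag and then decline to check. This is not a removable formality, because for general c\`adl\`ag $x$ the proposition is simply false without it. Take $x_n=x=\mathbf{1}_{[1,\infty)}$, let $\ell\in\mathbb{D}_0$ be given by $\ell(t)=t$ on $[0,1]$, $\ell\equiv 1$ on $[1,2]$, $\ell(t)=t-1$ for $t\geq 2$, and set $\ell_n=\max(\ell-1/n,\,0)$. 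Then $\ell_n\to\ell$ uniformly on compacts, yet $x\circ\ell_n=\mathbf{1}_{[2+1/n,\infty)}\to\mathbf{1}_{[2,\infty)}$ in $J_1$, while $x\circ\ell=\mathbf{1}_{[1,\infty)}$: the jump is displaced by the flat stretch exactly as you feared, so the avoidance condition is the crux of the proof, not a footnote to it. A secondary defect: your candidate deformations $\gamma_n=\ell_n^{-1}\circ\lambda_n\circ\ell$ are not admissible $J_1$ time changes, since the generalized inverse $\ell_n^{-1}$ jumps across every flat stretch of $\ell_n$, so $\gamma_n$ need be neither continuous nor onto; the deterministic lemma would need a different construction even on the clean-crossing set.

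The gap closes immediately, however, once you use how $\mathbb{D}$ is defined in this paper: its elements are \emph{continuous} paths into $E_\partial$ (and indeed in the application the $X^n$, $X$ are Brownian diffusions, while $L$ is continuous because $H$ is strictly increasing with infinite L\'evy measure). For continuous $x$, convergence $x_n\to x$ in $J_1$ is equivalent to locally uniform convergence, and then for any $\ell_n\to\ell$ in $\mathbb{D}_0$ the elementary two-step estimate $\sup_{t\leq T}|x_n(\ell_n(t))-x(\ell_n(t))| + \sup_{t\leq T}|x(\ell_n(t))-x(\ell(t))|$, with the second term controlled by uniform continuity of $x$ on compacts, gives $x_n\circ\ell_n\to x\circ\ell$ locally uniformly. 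Hence $C$ is continuous at \emph{every} $(x,\ell)$ with $x$ continuous, your set $\mathcal{C}$ has full measure trivially, and the entire clean-crossing analysis is unnecessary here. With that observation inserted, your proof is correct and self-contained, and it is a reasonable alternative to the paper's citation of Kurtz; you could even drop the Skorokhod representation step by applying the mapping theorem directly to the map $C$, which is Borel and continuous at almost every point of the limit law.
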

\begin{proof}
The proof follows from part b) of Theorem 1.1 and part a) of Lemma 2.3 in \cite{KurtzRTC}. Lemma 2.3 gives convergence for strictly increasing time changes. Since $H$ is strictly increasing, we use part c) of Theorem 1.1 and find results for $L$ which is non-decreasing and continuous. Then, part b) holds for the random time changes $L^n$.
\end{proof}

\begin{tm}
As $n\to \infty$,
\begin{align*}
X^{L, n} \to X^L \quad \textrm{in distribution in} \quad \mathbb{D} \quad \boldsymbol{\xi}-a.s. \quad \textrm{on} \quad \Omega^{(\boldsymbol{\xi})}.
\end{align*}
In particular, as $c_n \to c \geq 0$,
\begin{itemize}
\item[i)] if $c=0$, then $X^L$ is reflected on $\partial \Omega^{(\xi)}$, that is the process driven by
\begin{align*}
\mathfrak{D}^\Phi_t u = \Delta_N u, \quad u_0=f \in D(\Delta_N)
\end{align*}
where
\begin{align*}
D(\Delta_N) = \{u \in H^1(\Omega^{(\xi)}\setminus B): \, \Delta u \in L^2(\Omega^{(\xi)}\setminus B),\, u|_{\partial B}=0,\, (\partial_{\bf n} u)|_{\partial \Omega^{(\xi)}}=0\};
\end{align*}
\item[ii)] if $c \in (0, \infty)$, then $X^L$ is (elastic) partially reflected on $\partial \Omega^{(\xi)}$, that is the process driven by
\begin{align*}
\mathfrak{D}^\Phi_t u = \Delta_R u, \quad u_0=f \in D(\Delta_R)
\end{align*}
where
\begin{align*}
D(\Delta_R) = \{u \in H^1(\Omega^{(\xi)}\setminus B): \, \Delta u \in L^2(\Omega^{(\xi)}\setminus B),\, u|_{\partial B}=0,\, (\partial_{\bf n} u + c u)|_{\partial \Omega^{(\xi)}}=0\};
\end{align*}
\item[ii)] if $c=\infty$, then $X^L$ is killed on $\partial \Omega^{(\xi)}$, that is the process driven by
\begin{align*}
\mathfrak{D}^\Phi_t u = \Delta_D u, \quad u_0=f \in D(\Delta_D)
\end{align*}
where
\begin{align*}
D(\Delta_D) = \{u \in H^1(\Omega^{(\xi)}\setminus B): \, \Delta u \in L^2(\Omega^{(\xi)}\setminus B),\, u|_{\partial B}=0,\, u|_{\partial \Omega^{(\xi)}}=0\}.
\end{align*}
\end{itemize}
\end{tm}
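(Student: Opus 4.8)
The plan is to decouple the spatial and temporal limits and then recombine them through the random time change theorem. Since the inverse subordinator $L$ is characterized by the fixed symbol $\Phi$ and does not depend on $n$, the time change is common to the whole sequence, so in the notation of Proposition \ref{teoK} one has $L^n = L$. Assuming, as is natural, that $L$ is independent of the spatial motions, joint convergence $(X^n, L) \to (X,L)$ in distribution follows from the marginal convergence $X^n \to X$ alone (the laws being products whose marginals converge). Proposition \ref{teoK} then yields $X^{L,n} = X^n \circ L \to X \circ L = X^L$ in distribution in $\mathbb{D}$, so the entire statement reduces to the convergence of the base reflecting/elastic/absorbed Brownian motions $X^n$ on $\Omega^{(\xi|n)}\setminus B$ to the corresponding diffusion on $\Omega^{(\xi)}\setminus B$.

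To obtain $X^n \to X$ I would invoke the equivalence, due to Mosco \cite{Mosc}, between $M$-convergence of the forms $\mathcal{E}^n$ (Definition \ref{d2.4}) and strong convergence of the associated resolvents and semigroups on the common space $L^2(\Omega^{(\xi)}\setminus B)$, on which the forms are already defined by restriction as in the statement. Strong semigroup convergence gives convergence of the finite-dimensional distributions of the symmetric Hunt processes attached to the regular Dirichlet forms; combined with tightness of $\{X^n\}$ in $\mathbb{D}$ — which I would deduce from the uniform (extension/domain) property of the prefractal Lipschitz domains and the ensuing $n$-independent equicontinuity estimates — this upgrades to convergence in distribution in $\mathbb{D}$. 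The limit form is then identified with the Neumann, Robin or Dirichlet Laplacian according to $c = \lim_n c_n$, matching the generators $\Delta_N$, $\Delta_R$, $\Delta_D$ in the statement.

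The heart of the argument is the $M$-convergence $\mathcal{E}^n \to \mathcal{E}$, and the decisive point is the asymptotics of the renormalized boundary integral. Since the length of the $n$-th prefractal curve is $(\sigma^{(\xi|n)})^{-1} = 4^n/\ell^{(\xi|n)}$, the weight $\sigma^{(\xi|n)}$ turns $\int_{\partial\Omega^{(\xi|n)}} |u|^2\, ds$ into an average, and the surface measures $\sigma^{(\xi|n)}\, ds$ converge weakly to the normalized $d^{(\xi)}$-dimensional measure $\mu^{(\xi)}$ on $\partial\Omega^{(\xi)}$; this is exactly where the $d$-set estimate \eqref{eq:6} and the trace theory of Jonsson--Wallin \cite{JonWal} are used, and where the environment enters through \eqref{dimHK}, so that the statement holds $\boldsymbol{\xi}$-a.s.\ on the realizations for which the law of large numbers fixes $d^{(\xi)}$ and $\mu^{(\xi)}$. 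For the $\liminf$ condition \textbf{(a)} I would combine lower semicontinuity of the Dirichlet integral under weak $L^2$-convergence with a Fatou-type bound for the nonnegative boundary term; for the recovery condition \textbf{(b)} I would construct $v_n$ from $u$ through extension and density of smooth functions so that both the bulk energy and the boundary trace pass to the limit. When $c=0$ the boundary term is asymptotically negligible and one recovers the pure Neumann form; when $c\in(0,\infty)$ it converges to $c\int_{\partial\Omega^{(\xi)}} |u|^2\, d\mu^{(\xi)}$, yielding the Robin form; when $c=\infty$, boundedness of $\mathcal{E}^n(v_n,v_n)$ forces the boundary integral to vanish, so the weak limit $u$ has zero trace on $\partial\Omega^{(\xi)}$, producing the Dirichlet form.

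The hard part will be the recovery sequence together with the trace convergence across the moving boundary $\partial\Omega^{(\xi|n)} \to \partial\Omega^{(\xi)}$: one must produce approximants whose bulk energy and renormalized surface integral both converge, uniformly in the irregular geometry, and in the $c=\infty$ case one must additionally approximate functions with vanishing fractal trace by functions that are suitably small on the prefractal boundaries. Controlling the trace terms uniformly in $n$, via extension operators with $n$-independent norms and the $d$-set property, is the main technical obstacle; once $M$-convergence is established, the passage to process convergence via Mosco's theorem and the final time change via Proposition \ref{teoK} are comparatively routine.
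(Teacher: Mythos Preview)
Your strategy coincides with the paper's: prove $M$-convergence of $\mathcal{E}^n$, pass to strong semigroup convergence via Mosco, deduce $X^n\to X$ in law, and apply Proposition~\ref{teoK} with $L^n\equiv L$. A few implementation points differ. The paper cites Theorem~5.2 of \cite{CAP} for the $M$-convergence when $c_n\to c<\infty$ and only treats $c_n\to\infty$ in detail; there, condition~(b) is immediate by taking $v_n=u$ for $u\in H_0^1(\Omega^{(\xi)}\setminus B)$, rather than via a density/extension construction. For condition~(a) note that the Dirichlet integral is not lower semicontinuous under weak $L^2$-convergence alone: the paper first applies the Jones extension operator (with $n$-independent norm, cf.\ Theorem~3.4 in \cite{CAP}) to transplant $v_n$ into $H^1(\Omega^{(\xi)}\setminus B)$ and extract a weakly $H^1$-convergent subsequence, which is what makes the $\liminf$ inequality for the bulk term legitimate. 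The passage from semigroup to process convergence is handled by the black-box Trotter--Sova--Kurtz--Mackevi\v{c}ius theorem (Theorem~17.25 in \cite{Kal}) rather than by separate finite-dimensional convergence plus tightness. Finally, the $\boldsymbol{\xi}$-a.s.\ conclusion is simply that the argument holds for \emph{every} fixed $\xi\in\Xi$; the law of large numbers for $d^{(\xi)}$ plays no role at this step.
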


\begin{remark}We point out  that the condition on the boundary $\partial \Omega^{(\xi)}$ must be meant in the dual of certain Besov spaces  (for details, see \cite{CAP0}, \cite{LV} and the references therein). 
\end{remark}

\begin{proof}
Fix $\xi \in \Xi$. First we prove the M-convergence in $L^2(\Omega^{(\xi)}\setminus B)$ of the Dirichlet forms $\mathcal{E}^n$.

The case of finite limit has been addressed  in Theorem 5.2 in  \cite{CAP}: in particular, it has been proved that 
 if  $c_n\to c\geq 0$, then the sequence of forms
$\mathcal{E}^n(\cdot,\cdot)$ $M$--converges in the space $L^2({\Omega}^{(\xi)}\setminus B)$ to the form$$
\mathcal{E}_c(u,u)=\begin{cases}  \displaystyle \int_{\Omega^{(\xi)}\setminus B} |\nabla u|^2\, dx dy +c \int_{\partial \Omega ^{(\xi)}}
|u|^2 d\mu^{(\xi)}  & \hbox{\text for}\,u|_{\Omega^{(\xi)}\setminus B}\in
 H^1(\Omega^{(\xi)}\setminus B), \, u|_{\partial B} =0\\
+\infty\quad &\hbox{\text otherwise}\end{cases}.$$

The last form $\mathcal{E}_c,$ for $c \in (0,\infty)$, is associated with the semigroup (\cite{BluGet68,chen-book})
\begin{align}
\label{semigR}
\mathbf{E}_x[f(X_t)] = \mathbf{E}_x[f({^*X}_t) M_t] 
\end{align}
where the multiplicative functional $M_t$ is associated to the Revuz measure given by the perturbation of the form $\mathcal{E}_c$. Thus, \eqref{semigR} is the solution to $\partial_t u = \Delta_R u$, $u_0=f \in D(\Delta_R)$. 

For $c=0$, the form $\mathcal{E}_c$ is associated with
\begin{align*}
\mathbf{E}_x[f(X_t)] = \mathbf{E}_x[f({^*X}_t)]
\end{align*}
solution to $\partial_t u = \Delta_N u$, $u_0=f \in D(A)$.

Now we prove that 
if $c_n \to \infty$, the sequence of forms $\mathcal{E}^n$ M-converges on $L^2(\Omega^{(\xi)})$ to the form 
$$
\mathcal{E}_\infty(u,u)=\begin{cases}  \displaystyle \int_{\Omega^{(\xi)}\setminus B} |\nabla u|^2\, dx dy & \hbox{\text for}\,u|_{\Omega^{(\xi)}\setminus B}\in
 H_0^1(\Omega^{(\xi)}\setminus B)\\
+\infty\quad &\hbox{\text otherwise.}\end{cases}$$

First we prove condition $ (a)$  of Definition \ref{d2.4}. Up to passing to a
subsequence, which we still denote by $v_n$,
we can suppose that 
\begin{equation}\label{e3.14} \,v_n |_{\Omega^{(\xi |n)}\setminus B}\in
 H^1(\Omega^{(\xi | n)}\setminus B),  \end{equation} and, for every $n,$
\begin{equation}\label{ee3.14} ||v_n||_{H^1(\Omega^{(\xi|n)}\setminus B)} \leqslant c^*, \end{equation} with $c^*$  independent of $n.$
First we extend $v_n$  by Jones extension operator   (Theorem 1 in \cite{Jones}) and after we restrict it to the domain $\Omega^{(\xi)}\setminus B$ : more precisely, we extend $v_n$ to a function $v^*_n=Ext_J\, v_n|_{\Omega^{(\xi)}\setminus B},$ such that \begin{equation}\label{eee3.14} ||v^*_n||_{H^1(\Omega^{(\xi)}\setminus B)}   \leqslant  C_J ||v_n||_{H^1(\Omega^{(\xi)}_n\setminus B)}  \leqslant C_J c^* . \end{equation}

We point out that the constant $C_J $ independent of $n$ (see Theorem 3.4 in  \cite{CAP})  that is  the norm of extension operator is independent of the (increasing) number of sides.

Then,  there exists $v^*$ such that   the sequence $v^*_n$ weakly converges to $v^*$ in $H^1(\Omega^{(\xi)}\setminus B):$ for the uniqueness of the limit in the weak topology, we obtain that  $v^*=u$ and, in particular, $u\in H^1(\Omega^{(\xi)}\setminus B).$
Since  the sequence $v^*_n$ weakly converges to $u$ in $H^1(\Omega^{(\xi)}\setminus B),$
we have that \begin{equation}\label{11} \underline{\lim}\ \int_{\Omega^{(\xi|n)}\setminus B} |\nabla v_n|^2\, dx dy\ge \int_{\Omega^{(\xi)}\setminus B} |\nabla u|^2\, dx dy.\end{equation}

 From the compact embedding of  $H^1(\Omega^{(\xi)})$ in $H^\alpha(\Omega^{(\xi)})$ ($\frac{1}{2}<\alpha<1$), we  have that
\begin{equation}\label{12s} ||v^*_n-u||_{H^\alpha(\Omega^{(\xi)}\setminus B)}\to 0\end{equation}	and by using Trace theorems (see \cite{JonWal} and  \cite{CAPVIV})
we obtain that 
\begin{equation}\label{bou}   \sigma^{(\xi | n)} \int_{\partial\Omega^{(\xi | n)}}
|v_n|^2 ds   \to k \int_{\partial\Omega^{(\xi)}}
| u|^2 d\mu^{(\xi)} \end{equation}  when $n\to\infty$ (see Theorem 2.1 in \cite{CAP}).
We stress the fact that the value of $\sigma^{(\xi | n)}$ play a crucial role in the previous limit.

Now, if  $c_n\to \infty,$ for any $k>0$ there exists $n_1$ such that, for all $n>n_1,$ $c_n\geq k.$
Then 
\begin{equation}\label{16++}  c_n \sigma^{(\xi | n)} \int_{\partial\Omega^{(\xi | n)}}
| v_n|^2 ds  \geq  k \sigma^{(\xi | n)} \int_{\partial\Omega^{(\xi | n)} }
| v_n|^2 ds    \to k \int_{\partial\Omega^{(\xi)}}
| u|^2 d\mu^{(\xi)} \end{equation}  when $n\to\infty.$
Dividing for $k$ and letting $k\to\infty$ we obtain that 
\begin{equation}\label{16+++}  \int_{\partial\Omega^{(\xi)}}
| u|^2 d\mu^{(\xi)}=0 \end{equation} and so $u=0$ on $\partial\Omega^{(\xi)}.$
By combining \eqref{11}, \eqref{bou}, \eqref{16+++}  we  have proved condition $ (a)$  of Definition \ref{d2.4}.

In order to prove condition (b) of Definition \ref{d2.4}, we can assume that $u\in H_0^1(\Omega^{(\xi)}\setminus B)$ without loss of generality: then, the choice of $v_n=u$ suffices to achieve the result. 
So we have proved the M-convergence of the forms $\mathcal{E}^n(\cdot,\cdot)$   on $L^2(\Omega^{(\xi)}\setminus B)$ to the form $\mathcal{E}_\infty$ when $c_n \to \infty$.

From the M-convergence of the forms $\mathcal{E}^n(\cdot,\cdot)$   on $L^2(\Omega^{(\xi)}\setminus B),$  by using the results in the recent paper  \cite{CapDovVIA}, we obtain the convergence of the time changed processes.

More precisely, from the $M$-convergence of the forms we have  the  strong convergence of semigroups. From Theorem 17.25 (Trotter, Sova, Kurtz, Mackevi\v{c}ius) in \cite{Kal} we have that strong convergence of semigroups (Feller semigroups) is equivalent to weak convergence of measures if $X^n_0 \to X_0$ in distribution. Then we obtain that $X^n \stackrel{d}{\to} X$ in $\mathbb{D}$. 

From Proposition \ref{teoK}, we have that
\begin{align*}
\forall\, \xi \in \Xi, \quad X^n \circ L =: X^{L, n} \to X^L := X \circ L \quad \textrm{on} \quad \Omega^{(\xi)}
\end{align*}
in distribution as $n\to \infty$ in $\mathbb{D}$.

From the pointwise convergence, we get that $\boldsymbol{\xi}-$a.s.
\begin{align*}
X^{L, n} \to X^L \quad \textrm{on} \quad \Omega^{(\boldsymbol{\xi})}
\end{align*}
in distribution as $n\to \infty$ in $\mathbb{D}$. 

\end{proof}

Let us consider now the process $X^L$ on $E$. We point out some peculiar aspects of $X^L$ and the corresponding lifetimes.
 
\begin{tm}
Let us consider the Cauchy problems
\begin{align}
\label{CP:ordinary}
\partial_t w = Aw, \quad w_0 = f \in D(A)
\end{align}
and
\begin{align}
\label{CP:fractional}
\mathfrak{D}^\Phi_t u  = Au, \quad u_0 = f \in D(A)
\end{align}
with $\Phi$ such that
\begin{align*}
\lim_{\lambda \to 0} \frac{\Phi(\lambda)}{\lambda}  \in (0, \infty).
\end{align*}
We have that, $\forall\, x\in E$:
\begin{itemize}
\item[-] if $\Phi^\prime (0) < 1$, then 
$$\int_0^\infty u(t,x)dt < \int_0^\infty w(t,x) dt,$$  
\item[-] if $\Phi^\prime (0) > 1$, then 
$$\int_0^\infty u(t,x)dt > \int_0^\infty w(t,x) dt,$$  
\item[-] if $\Phi^\prime (0) = 1$, then 
$$\int_0^\infty u(t,x)dt = \int_0^\infty w(t,x) dt.$$  
\end{itemize}
\end{tm}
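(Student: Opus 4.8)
The plan is to reduce the comparison of the two time-integrals to a single subordination identity for Laplace transforms and then to extract the claim by letting the Laplace variable tend to zero. Observe first that, writing $w(t,x)=P_tf(x)=\mathbf{E}_x[f(X_t)]$ for the solution of \eqref{CP:ordinary} and taking $f$ in the positive cone, both $w$ and $u$ are nonnegative, so the two quantities to be compared are exactly the values at $\lambda=0$ of the Laplace transforms in $t$:
$$\int_0^\infty w(t,x)\,dt = \lim_{\lambda\downarrow 0}\widetilde w(\lambda,x),\qquad \int_0^\infty u(t,x)\,dt = \lim_{\lambda\downarrow 0}\widetilde u(\lambda,x),$$
where $\widetilde w(\lambda,x)=\int_0^\infty e^{-\lambda t}P_tf(x)\,dt$ and likewise for $u$. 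Both are monotone limits of nonnegative integrals, hence well defined in $[0,\infty]$.

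Next I would establish the key identity
$$\widetilde u(\lambda,x) = \frac{\Phi(\lambda)}{\lambda}\,\widetilde w\big(\Phi(\lambda),x\big),\qquad \lambda>0.$$
Starting from the representation \eqref{sol-time-frac-problem}, I apply Fubini (licit by nonnegativity) to write
$$\widetilde u(\lambda,x)=\int_0^\infty P_sf(x)\left(\int_0^\infty e^{-\lambda t}\,\mathbf{P}_0(L_t\in ds)\,dt\right),$$
and then use the standard Laplace transform of the law of the inverse subordinator, namely
$$\int_0^\infty e^{-\lambda t}\,\mathbf{P}_0(L_t\in ds)\,dt=\frac{\Phi(\lambda)}{\lambda}\,e^{-s\Phi(\lambda)}\,ds,$$
which follows from \eqref{relationHL} together with $\mathbf{E}_0[e^{-\lambda H_s}]=e^{-s\Phi(\lambda)}$ (see \cite{toaldo}). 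Substituting gives the identity, since $\int_0^\infty e^{-s\Phi(\lambda)}P_sf(x)\,ds=\widetilde w(\Phi(\lambda),x)$.

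It then remains to pass to the limit $\lambda\downarrow 0$. Because $\Phi$ is a Bernstein function we have $\Phi(0)=0$ and $\Phi(\lambda)\downarrow 0$, so by monotone convergence $\widetilde w(\Phi(\lambda),x)\to \int_0^\infty w(s,x)\,ds$. From \eqref{tailSymb}, $\Phi(\lambda)/\lambda=\int_0^\infty e^{-\lambda z}\overline{\Pi}(z)\,dz$ increases, as $\lambda\downarrow 0$, to $\int_0^\infty\overline{\Pi}(z)\,dz=\Phi'(0)$, finite and strictly positive precisely under the hypothesis $\lim_{\lambda\to 0}\Phi(\lambda)/\lambda\in(0,\infty)$. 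Hence
$$\int_0^\infty u(t,x)\,dt=\Phi'(0)\int_0^\infty w(t,x)\,dt,$$
and the trichotomy according to whether $\Phi'(0)$ is $<1$, $>1$, or $=1$ follows at once.

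The only genuine point to check is that the common factor $\int_0^\infty w(t,x)\,dt$ is finite and strictly positive, so that multiplying by $\Phi'(0)$ yields a strict inequality in the first two cases; this is where killing matters. Since the base process is killed upon hitting $\partial B$ (the condition $u|_{\partial B}=0$ in $D(A)$), it has finite mean occupation time and $\int_0^\infty P_tf(x)\,dt<\infty$ for $x\in E$, while positivity holds for $f\geq 0$ not identically zero. Establishing this finiteness cleanly --- equivalently, that $\widetilde w(0^+,x)$ is the value of the Green potential operator at $f$ --- is the main obstacle; the remainder is a routine Laplace-transform computation.
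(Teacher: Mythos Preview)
Your argument is correct and follows essentially the same route as the paper: both proofs use the subordination representation \eqref{sol-time-frac-problem}, invoke the Laplace identity $\int_0^\infty e^{-\lambda t}\,\mathbf{P}_0(L_t\in ds)\,dt=\frac{\Phi(\lambda)}{\lambda}e^{-s\Phi(\lambda)}\,ds$, and let $\lambda\downarrow 0$ to obtain $\int_0^\infty u(t,x)\,dt=\Phi'(0)\int_0^\infty w(t,x)\,dt$. Your additional remarks on monotone convergence and on the finiteness and positivity of $\int_0^\infty w(t,x)\,dt$ (needed to make the inequalities strict) are points the paper leaves implicit.
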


\begin{proof}
The solution to \eqref{CP:ordinary} has the following probabilistic representation
\begin{align*}
w(t,x) = \mathbf{E}_x[f(X_t)] = \mathbf{E}_x[f({^*X}_t) M_t]
\end{align*}
where $M_t = \mathbf{1}_{(t < \zeta)}$ is the multiplicative functional written in terms of the lifetime $\zeta$ of the process $X$ on $E$. Then, we consider the part process $X$ of ${^*X}$ where ${^*X}_0=x \in E$. It is well known that $M_t$ characterizes uniquely the associated semigroup (\cite{BluGet68}), that is the solution $w$. We also have that   
\begin{align*}
\mathbf{E}_x\left[ \int_0^\zeta   f(X_s) ds\right] = \int_0^\infty w(t,x) dt =: \overline{w}(x)   
\end{align*}
is the solution to the elliptic problem on $E$
\begin{align*}
-A  \overline{w} = f.
\end{align*}
From Theorem \ref{time-frac-THM} we have that the time-changed  process $X^L$ can be considered in order to solve the problem \eqref{CP:fractional}, that is 
\begin{align*}
u(t, x) = \mathbf{E}_x[f(X_t^L)] = \mathbf{E}_x[f({^*X}^L_t) \mathbf{1}_{(t < \zeta^L)}] 
\end{align*}
where $\zeta^L$ is the lifetime of $X^L$. As before we introduce the 
\begin{align*}
\overline{u}(x) := \int_0^\infty u(t,x) dt = \mathbf{E}_x\left[ \int_0^{\zeta^L}   f(X_s^L) ds\right]
\end{align*}
which is the solution to the elliptic problem associated with the fractional Cauchy problem \eqref{CP:fractional}. We are able to obtain the key relation between $\overline{w}$ and $\overline{u}$ by taking into consideration the following plain calculations. First we recall \eqref{sol-time-frac-problem} where $P_sf(x)$ here is given by $w(s,x)$ with $w(s,x) \to f(x)$ as $s\to 0$. Moreover (see \cite{DelRus}),
\begin{align}
\label{ResL}
\int_0^\infty e^{-\lambda t} \mathbf{P}_0(L_t \in ds) dt = \frac{\Phi(\lambda)}{\lambda} e^{-s \Phi(\lambda)} ds.
\end{align}
We have that 
\begin{align*}
\overline{u}(x) = & \lim_{\lambda \to 0} \int_0^\infty e^{-\lambda t} u(t,x) dt \\
= & [\textrm{by} \; \eqref{sol-time-frac-problem} ]\\
= & \lim_{\lambda \to 0} \int_0^\infty e^{-\lambda t} \int_0^\infty w(s,x) \mathbf{P}_0(L_t \in ds)\, dt\\
= & [\textrm{by} \; \eqref{ResL} ]\\
= & \lim_{\lambda \to 0} \int_0^\infty w(s, x) \frac{\Phi(\lambda)}{\lambda} e^{-s \Phi(\lambda)} ds\\
=& \left( \lim_{\lambda \to 0} \frac{\Phi(\lambda)}{\lambda} \right) \int_0^\infty w(s,x) ds.
\end{align*}
That is
\begin{align*}
\overline{u}(x) = \left( \lim_{\lambda \to 0} \frac{\Phi(\lambda)}{\lambda} \right) \overline{w}(x)
\end{align*}
and this gives a connection between solutions of elliptic problems introduced above in the proof. Since $\Phi$ is a Bernstein function with $\Phi(0)=0$ we get the result. 
\end{proof}

The characterization given in the previous result admits a probabilistic interpretation in terms of mean lifetime of the base and time-changed processes. The problems \eqref{CP:ordinary} and \eqref{CP:fractional} with $f=\mathbf{1}_E$ are associated with $\overline{w}(x) = \mathbf{E}_x[\zeta]$ and $\overline{u}(x)=\mathbf{E}_x[\zeta^L]$ as described in the previous proof and the mean lifetime says how much the time change $L$ modifies the base process $X$. By following the definition given in \cite{DelRus} and the relation between $\overline{w}$ and $\overline{u}$ we say that $X$ is delayed or rushed on $E$ by $L$. An example is given by the tempered fractional derivative (\cite{Beghin, tempered}) associated with the symbol $\Phi(\lambda) = (\lambda + \eta)^\beta - \eta^\beta$ with $\eta>0$ and $\beta \in (0,1)$. We get that
\begin{align*}
\mathbf{E}_x[\zeta^L] =\beta \eta^{\beta -1} \mathbf{E}_x[\zeta]
\end{align*}
that is, if $\beta \eta^{\beta-1} < 1$ then the process $X$ is rushed by $L$,  whereas if $\beta \eta^{\beta-1} > 1$ then the process $X$ is delayed by $L$.

The previous discussion on either delayed or rushed processes holds according to specific regularity conditions on the boundary $\partial E$. We must have that $\sup_E w(x) < \infty$ which is the characterization of trap domains (written here for $X$ with generator $A$) given in \cite{BCM} for the Brownian motion. By applying the result in \cite{BCM} it follows that the following proposition holds true.

\begin{prop}
For $\xi \in \Xi$, the domains $\Omega^{(\xi| n)}$, $n \in \mathbb{N}$ are non trap for the Brownian motion.
\end{prop}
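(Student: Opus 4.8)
The plan is to invoke the characterization of trap domains of \cite{BCM} and to check that, for each fixed $n$, the pre-fractal domain meets its geometric hypotheses. Recall from the discussion preceding the statement that the relevant quantity is $w(x)=\mathbf{E}_x[\zeta]$, the mean lifetime of the base process started at $x$; with Neumann (reflecting) condition on the outer boundary and killing on the inner ball this is precisely the mean hitting time $\mathbf{E}_x[\tau_B]$ of $B$ for the Brownian motion reflected on $\partial\Omega^{(\xi|n)}$. According to \cite{BCM}, $\Omega^{(\xi|n)}$ is a non trap domain for this motion exactly when $w$ is bounded, i.e. $\sup_{x\in\Omega^{(\xi|n)}}w(x)<\infty$. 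So the whole proof reduces to producing such a uniform bound. To this end I would first record that $w$ is the weak solution of the torsion-type mixed problem associated with $A^n=\Delta$ and the boundary data of $D(A^n)$ for $c_n=0$, namely
\begin{align*}
-\Delta w = 1 \ \ \text{in}\ \ \Omega^{(\xi|n)}\setminus B,\qquad \partial_{\mathbf n} w = 0 \ \ \text{on}\ \ \partial\Omega^{(\xi|n)},\qquad w = 0 \ \ \text{on}\ \ \partial B.
\end{align*}

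Next I would verify the geometric hypothesis of \cite{BCM}. For each fixed $n$ the curve $K_n^{(\xi)}$ is a finite concatenation of line segments, so $\partial\Omega^{(\xi|n)}$ is a polygonal curve and $\Omega^{(\xi|n)}\setminus B$ is a bounded Lipschitz domain (in fact a bounded uniform, or $(\varepsilon,\delta)$-, domain). Crucially, the turning angles produced by the Koch construction are governed by $\theta(\ell_a)=\arcsin(\sqrt{\ell_a(4-\ell_a)}/2)$ with $\ell_a$ ranging over the finite set $\{\ell_a:a\in I\}\subset(2,4)$; hence the interior angles of $\Omega^{(\xi|n)}$ stay bounded away from the degenerate values $0$ and $2\pi$, and in particular the domain has no inward or outward cusps. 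Cusps are exactly the geometric features that create traps, so their absence places $\Omega^{(\xi|n)}$ in the class covered by the criterion of \cite{BCM}.

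Finally I would combine the two ingredients: on a bounded Lipschitz domain the solution $w$ of the above problem with bounded (constant) right-hand side is bounded, by elliptic regularity together with the maximum principle, the Poincar\'e inequality underlying the $L^\infty$ estimate being available because $\Omega^{(\xi|n)}\setminus B$ is an extension domain with the uniform constant $C_J$ of \cite{CAP}. This yields $\sup_{x\in\Omega^{(\xi|n)}}w(x)<\infty$, and the characterization of \cite{BCM} then gives that $\Omega^{(\xi|n)}$ is non trap for the Brownian motion. I expect the main obstacle to be the boundary analysis: one must guarantee that the proliferation of corners of the pre-fractal curve as $n$ grows does not trap the reflected Brownian motion (which would force $w$ to blow up). The resolution is that, although the number of sides increases with $n$, for each fixed $n$ the domain is genuinely Lipschitz with angles uniformly bounded away from the degenerate ones, so \cite{BCM} applies; if uniformity in $n$ were also wanted, the decisive input would be precisely the $n$-independent extension bound of \cite{CAP}.
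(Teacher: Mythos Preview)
Your proposal is correct and follows the same route as the paper: the paper's proof is a one-line citation of \cite{BCM}, and your argument simply fills in the verification that each bounded pre-fractal domain satisfies the geometric hypotheses needed for that result (polygonal boundary, no cusps, hence Lipschitz). Your additional remarks on the torsion problem and on the $n$-independent extension constant go beyond what the statement requires, but they are consistent with the paper's surrounding discussion.
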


Since the previous statement holds pointwise for any contraction factor, we immediately obtain the following general statement.

\begin{prop}
For the $\Xi$-valued random vector $\boldsymbol{\xi}$ the domains $\Omega^{(\boldsymbol{\xi}| n)}$, $n \in \mathbb{N}$ are $\boldsymbol{\xi}-$a.s. non trap for the Brownian motion. 
\end{prop}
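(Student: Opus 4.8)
The plan is to reduce the random assertion to the deterministic statement proved in the preceding Proposition, arguing realization by realization. The essential observation is that the preceding statement is not probabilistic in nature: for every fixed environment sequence $\xi \in \Xi$ and every $n \in \mathbb{N}$, the prefractal domain $\Omega^{(\xi|n)} = \Upsilon^{(\xi)}_n(K_0)$ (with the interior ball $B$ removed) is a bounded domain with Lipschitz boundary, so the characterization of trap domains from \cite{BCM} applies directly and yields that $\Omega^{(\xi|n)}\setminus B$ is non-trap, i.e. $\sup_E w(x) < \infty$ with $w(x) = \mathbf{E}_x[\zeta]$. What makes the extension to $\boldsymbol{\xi}$ immediate is that this conclusion is \emph{uniform} in the environment: it holds for every $\xi \in \Xi$, and not merely for almost every $\xi$, because the argument only uses the Lipschitz geometry of an individual finite-level prefractal curve and never invokes the law of $\boldsymbol{\xi}$.

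First I would fix $n \in \mathbb{N}$ and regard $\boldsymbol{\xi}|n$ as a random vector taking values in the finite set $I^n$. For each outcome $\omega$, the realization $\boldsymbol{\xi}(\omega)|n = \xi|n \in I^n$ determines a single deterministic prefractal domain $\Omega^{(\xi|n)}$, to which the preceding Proposition applies verbatim; hence the event that $\Omega^{(\boldsymbol{\xi}|n)}$ is non-trap contains every outcome and therefore has probability one. Since the indicator of this event is identically equal to $1$, no measurability difficulty arises (the relevant map is constant, hence trivially measurable). Next I would intersect over $n \in \mathbb{N}$: there are only countably many indices and each corresponding event is in fact the whole sample space, so the countable intersection is again of full probability, giving that $\boldsymbol{\xi}$-a.s. the entire family $\{\Omega^{(\boldsymbol{\xi}|n)}\}_{n \in \mathbb{N}}$ is non-trap for the Brownian motion.

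I do not expect a genuine obstacle in this argument; the only point that requires a word of care is the uniformity claim underlying the first paragraph. One must confirm that the deterministic Proposition truly holds for \emph{every} $\xi \in \Xi$ uniformly, which it does precisely because each finite-level prefractal domain remains Lipschitz regardless of which contraction factors $\ell_{\xi_i} \in (2,4)$ are selected at each level, so the hypotheses of \cite{BCM} are met pointwise. Consequently the passage from the fixed environment to the random environment needs no quantitative probabilistic input — no Borel--Cantelli lemma and no strong law of large numbers — but only the elementary remark that an event containing the entire probability space occurs almost surely. This is exactly the sense in which, as noted above, ``the previous statement holds pointwise for any contraction factor.''
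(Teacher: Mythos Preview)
Your proposal is correct and follows exactly the paper's own reasoning: the paper simply notes that ``since the previous statement holds pointwise for any contraction factor, we immediately obtain the following general statement,'' and your argument is precisely the realization-by-realization unpacking of that sentence. There is no additional content in the paper's proof beyond what you have written.
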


\bigskip 
 
\textbf{Grant.} The authors are members of GNAMPA (INdAM) and are partially supported by Grants Ateneo \lq\lq Sapienza" 2018. 

\bigskip


\begin{thebibliography}{00}

\bibitem{ACV}
{M. Allen, L. Caffarelli, A. Vasseur,}
{A parabolic problem with a fractional time derivative.}
{Arch. Ration. Mech. Anal. 221 (2016), no. 2, 603-630.}
 
 
 
\bibitem{BH} 
{M. T. Barlow and B. M. Hambly,}
{Transition density estimates for Brownian motion on scale irregular Sierpinski gasket},
{Ann. Inst. H. Poincar\'e Probab. Statist.},  \textbf{33} (1997), 531--557.

\bibitem{BAZ}
{E. Bazhlekova,}
{The abstract Cauchy problem for the fractional evolution equation,}
{Fract. Calc. Appl. Anal., 1 (1998), 255-270.}

\bibitem{Beghin}
{L. Beghin,} 
{On fractional tempered stable processes and their governing differential equations,}
{Journal of Computational Physics, 293 (2015), 29-39.}


\bibitem{bertoin} {J. Bertoin,} 
{Subordinators: Examples and Applications.} 
{In: Bernard P. (eds) Lectures on Probability Theory and Statistics. Lecture Notes in Mathematics, vol 1717. Springer, Berlin, Heidelberg, 1999.}


\bibitem{BluGet68} { R. M. Blumenthal, R. K. Getoor,}
\emph{Markov Processes and Potential Theory.} 
Academic Press, New York, 1968.



\bibitem{BCM} 
{K. Burdzy, Z.-Q. Chen, D. E. Marshall,}
{Traps for reflected Brownian motion,}
{Math. Z. 252, (2006) 103 - 132.}


\bibitem{CAP0} 
{R.Capitanelli, }  Transfer across scale  irregular domains,  Series on Advances in Mathematics for Applied Sciences 82 (2009), 165--174


\bibitem{CAP} 
{R.Capitanelli, }
{Robin boundary condition on scale irregular fractals,} 
{Commun.  Pure Appl. Anal. 9 (2010), 1221--1234.}



\bibitem{CAPVIV} R. Capitanelli, M.A. Vivaldi, Trace Theorems on Scale Irregular Fractals in \lq\lq Classification and Application of Fractals", Nova Publishers, 2011. 

\bibitem{CapDovVIA} 
{R. Capitanelli, M. D'Ovidio,}
{Fractional equations via convergence of forms. }
{Fractional Calculus and Applied Analysis, 22 (2019), 844-870.}

\bibitem{DelRus} 
{R. Capitanelli, M. D'Ovidio,}
{Delayed and rushed motions through time change. }
{ALEA, Lat. Am. J. Probab. Math. Stat. 17 (2020), 183-204.}


\bibitem{chen} 
{Z.-Q. Chen,} 
{Time fractional equations and probabilistic representation,} 
Chaos, Solitons \& Fractals, 102, (2017),  168 -174.

\bibitem{CMN}
{Z.-Q. Chen, M. M. Meerschaert, E. Nane, }
{Space-time fractional diffusion on bounded domains.} 
{J. Math. Anal. Appl. 393 (2012), no. 2, 479-488.} 

\bibitem{chen-book} 
{Z.-Q. Chen,  M. Fukushima,}
{Symmetric Markov Processes, Time Change, and Boundary Theory,} 
London Mathematical Society Monographs, Princeton University Press, 2012.

\bibitem{CKKW} 
{Z.-Q. Chen, P. Kim, T. Kumagai, J. Wang},
{Heat kernel estimates for time fractional equations,}
{ Forum Math. 30 (2018), no. 5, 1163-1192 .}


\bibitem{DovNane}
{M. D'Ovidio, E. Nane,}
{Fractional Cauchy problems on compact manifolds.}
{ Stoch. Anal. Appl. 34 (2016), no. 2, 232-257.}

\bibitem{EK}
{S. D. Eidelman, A. N. Kochubei,} 
{Cauchy problem for fractional diffusion equations.}
{J. Differ. Equ. 199 (2004), 211-255.}


\bibitem{Kal} 
{ O. Kallenberg,}
\emph{Foundations of Modern Probability,}
Springer-Verlag New York, Inc., New York 1997.

\bibitem{KLW}
{V. Keyantuo, C. Lizama, M. Warma,}
{Existence, regularity and representation of solutions of time fractional diffusion equations.} 
{Adv. Differential Equations 21 (2016), no. 9-10, 837-886.}


\bibitem{KurtzRTC} 
{ T. G. Kurtz,}   
Random Time Changes and Convergence in Distribution Under the Meyer-Zheng Conditions.
\emph{Ann. Probab.} 19, No 3 (1991), 1010--1034.

\bibitem{MNV}
{M. M. Meerschaert, E. Nane, P. Vellaisamy,}
{Fractional Cauchy problems on bounded domains,} 
{The Annals of Probability, Vol. 37 (2009), No. 3, 979-1007.}


\bibitem{FUK-book} 
{M. Fukushima, Y. Oshima, M. Takeda,}
{Dirichlet Forms and Symmetric Markov Processes,}
{Walter de Gruyter \& Co, New York, 1994. }

\bibitem{Jones}  P. W. Jones, Quasiconformal mapping and extendability of functions in Sobolev spaces, Acta
Math., 147 (1981), 71--88.

\bibitem{JonWal}{ A. Jonsson, H.  Wallin,}
{Function spaces on subsets of $\mathbb{R}^n$},  Math. Rep. 2 (1984),
no. 1, xiv+221.



\bibitem{LV} M. R. Lancia, P. Vernole, Venttsel' problems in fractal domains, J. Evol. Equ. 14 (2014), 681-712.

\bibitem{Mosc} 
{U. Mosco,}
{Convergence of convex sets and of solutions of variational inequalities,} 
{Adv. Math. 3, 510-585 (1969)}.


\bibitem{OB}
{E. Orsingher, L. Beghin,}
{Fractional diffusion equations and processes with randomly varying time,}
{Ann. Probab. 37, Number 1 (2009), 206-249.}


\bibitem{tempered}
{F. Sabzikar, M. M. Meerschaert, J. Chen,}
{Tempered Fractional Calculus,}
{J Comput Phys. 2015 293:14-28.} 


\bibitem{BerBook} 
{R. L. Schilling, R. Song, Z. Vondra\v{c}ek,}
{Bernstein Functions, Theory and Applications},
{Series:De Gruyter Studies in Mathematics 37, Berlin, 2010.}

\bibitem{toaldo} 
{B. Toaldo,}
{Convolution-type derivatives, hitting-times of subordinators and 
time-changed $C_\infty$-semigroups,} 
{Potential Analysis, 42 (2015), 115 - 140.}

\bibitem{Zac}
{V.Vergara, R. Zacher,} 
{Stability, instability, and blowup for time fractional and other nonlocal in time semilinear subdiffusion equations.} 
{J. Evol. Equ. 17 (2017), no. 1, 599-626.} 


\end{thebibliography}
\end{document}